\begin{document}

\newtheorem{theorem}{Theorem}
\newtheorem{lemma}[theorem]{Lemma}
\newtheorem{algol}{Algorithm}
\newtheorem{cor}[theorem]{Corollary}
\newtheorem{prop}[theorem]{Proposition}

\newtheorem{proposition}[theorem]{Proposition}
\newtheorem{corollary}[theorem]{Corollary}
\newtheorem{conjecture}[theorem]{Conjecture}
\newtheorem*{definition*}{Definition}
\newtheorem{remark}[theorem]{Remark}

 \numberwithin{equation}{section}
  \numberwithin{theorem}{section}

\newtheorem*{theorem*}{Theorem}
\newtheorem*{question*}{Question}

\newcommand{\comm}[1]{\marginpar{%
\vskip-\baselineskip 
\raggedright\footnotesize
\itshape\hrule\smallskip#1\par\smallskip\hrule}}

\def\sssum{\mathop{\sum\!\sum\!\sum}}
\def\ssum{\mathop{\sum\ldots \sum}}
\def\iint{\mathop{\int\ldots \int}}
\newcommand{\twolinesum}[2]{\sum_{\substack{{\scriptstyle #1}\\
{\scriptstyle #2}}}}

\def\cA{{\mathcal A}}
\def\cB{{\mathcal B}}
\def\cC{{\mathcal C}}
\def\cD{{\mathcal D}}
\def\cE{{\mathcal E}}
\def\cF{{\mathcal F}}
\def\cG{{\mathcal G}}
\def\cH{{\mathcal H}}
\def\cI{{\mathcal I}}
\def\cJ{{\mathcal J}}
\def\cK{{\mathcal K}}
\def\cL{{\mathcal L}}
\def\cM{{\mathcal M}}
\def\cN{{\mathcal N}}
\def\cO{{\mathcal O}}
\def\cP{{\mathcal P}}
\def\cQ{{\mathcal Q}}
\def\cR{{\mathcal R}}
\def\cS{{\mathcal S}}
\def\cT{{\mathcal T}}
\def\cU{{\mathcal U}}
\def\cV{{\mathcal V}}
\def\cW{{\mathcal W}}
\def\cX{{\mathcal X}}
\def\cY{{\mathcal Y}}
\def\cZ{{\mathcal Z}}

\def\C{\mathbb{C}}
\def\F{\mathbb{F}}
\def\K{\mathbb{K}}
\def\Z{\mathbb{Z}}
\def\R{\mathbb{R}}
\def\Q{\mathbb{Q}}
\def\N{\mathbb{N}}
\def\M{\textsf{M}}

\def\({\left(}
\def\){\right)}
\def\[{\left[}
\def\]{\right]}
\def\<{\langle}
\def\>{\rangle}

\def\vec#1{\mathbf{#1}}

\def\e{e}

\def\eq{\e_q}
\def\fS{{\mathfrak S}}

\def\bfalpha{{\boldsymbol \alpha}}

\def\lcm{{\mathrm{lcm}}\,}

\def\fl#1{\left\lfloor#1\right\rfloor}
\def\rf#1{\left\lceil#1\right\rceil}
\def\mand{\qquad\mbox{and}\qquad}

\def\jt{\tilde\jmath}
\def\ellmax{\ell_{\rm max}}
\def\llog{\log\log}

\def\Qbar{\overline{\Q}}

\def\lam{\lambda}
\def\Yildirim{Y{\i}ld{\i}r{\i}m\ }
\def\Lam{\Lambda}
\def\lam{\lambda}

\title[On the minimal number of small elements generating prime fields]
{\bf On the minimal number of small elements generating prime fields}

\author{Marc Munsch}
\address{5010 Institut f\"{u}r  Analysis und Zahlentheorie
8010 Graz, Steyrergasse 30, Graz}
\email{munsch@math.tugraz.at}

\date{\today}

\subjclass[2010]{11A07, 11A15, 11L40, 11T99, 11N36}
\keywords{Finite fields, generating set, primitive roots, Burgess inequality, sieve theory, character sums, algorithmic number theory}

\begin{abstract}
 In this note, we give an upper bound for the number of elements from the interval $\[1,p^{1/4e^{1/2}+\epsilon}\]$ necessary to generate the finite field $\mathbb{F}_{p}$ with $p$ an odd prime. The general result depends on the localization of the divisors of $p-1$ and can be for instance used to deduce easily results on a set of primes of density $1$.

\end{abstract}

\bibliographystyle{srtnumbered}
\maketitle

\section{Introduction}

 E. Artin conjectured in 1927 that any positive integer $n>1$, which is not a perfect square, is a primitive root modulo $p$ for infinitely many primes  $p$. It remains open nowadays but was proved assuming the Generalized Riemann Hypothesis for some specific Dedekind zeta functions by Hooley in 
\cite{Hooley}. Using the development of large sieve theory leading to Bombieri-Vinogradov theorem, one can show that Artin's primitive root conjecture is true for almost all primes (see for instance \cite{HB} or \cite{Moreesurvey} for an extended survey about this conjecture). Another related classical problem is to bound the size $g(p)$ of the smallest primitive root modulo $p$. The best unconditional result is $g(p)=O(p^{1/4+\epsilon})$ and is due to Burgess in \cite{Burgess}, as a consequence of his famous character sum estimate. This is very far from what we expect, and assuming Generalized Riemann Hypothesis we can for instance show that $g(p)= O(\log^2 p)$ (see \cite{Montgomery} following \cite{Ankeny} or \cite{Tao} for results under the Elliott-Halberstam conjecture). Like before, as a consequence of the large sieve, the upper bound $g(p) = O ((\log p)^{2+\epsilon})$ is valid for almost all primes (see \cite{BurElliott}). The problem of improving unconditionnaly the bound on the least primitive root seems presently out of reach.  For instance, we cannot perform directly the Vinogradov trick 
to show that there exists a primitive root less than $p^{1/4\sqrt{e}+\epsilon}$, however we can reach that range for this following variant question:
 
 \begin{question*}How large should be $N$ (in terms of $p$) such that $<1,\dots,N>$ is a generating set of $\mathbb{F}_{p}^{*}$. \end{question*} Is is shown by Burthe in \cite{Burthegenerating} that $N=p^{1/4e^{1/2}+\epsilon}$ is sufficient\footnote{The result holds in fact for a composite $n$ as long as $8\nmid n$.} and seems to be the lower limit of what is possible unless Burgess character sum bound is improved. Nonetheless, in view of this result, several interesting related questions can be formulated. Harman and Shparlinski considered the problem of minimizing the value of $k$ such that for a sufficiently large prime $p$ and for any integer $a<p$, there is always a solution to the congruence $$ n_1 \dots n_k \equiv a \,\,(\bmod \,p),\,\,1 \leq n_1, \dots , n_k \leq N$$ and showed in \cite{HarShpar} that $k=14$ is an admissible value. \footnote{$k$=7 is admissible if we only ask that there is a solution for almost all values of $a$.} From an algorithmic point of view, another interesting question is to know precisely how many elements of $\{1,\dots,N\}$ are in fact necessary to generate the full multiplicative group. We consider in this note the problem of the size of a generating set consisting of small elements less than $N$. \begin{question*} How many elements of $\{1,\cdots,p^{1/4e^{1/2}+\epsilon}\}$ do we need in order to to generate $\mathbb{F}_{p}^{*}$?\end{question*}

$\vspace{1mm}$

Let $p$ be a prime such that $p-1=\# \mathbb{F}_{p}^{*}=q_1^{\alpha_1}\dots q_r^{\alpha_r}$ with $q_i,\, i=1,\dots, r$ the prime factors of $p-1$. We denote by $\omega(n)$ the number of prime factors of an integer $n$, therefore we have $\omega(p-1)=r$.

$\vspace{1mm}$
 
 The first elementary result in this direction is the following:

\begin{lemma}\label{elementary} For every $\epsilon>0$, we need only $\omega(p-1)$ elements among $\{1,\cdots,p^{1/4e^{1/2}+\epsilon}\}$ to generate $\mathbb{F}_{p}^{*}$. \end{lemma}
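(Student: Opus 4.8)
The plan is to prove that $\omega(p-1)$ small elements suffice by exhibiting, for each prime power $q_i^{\alpha_i} \parallel p-1$, a single small generator of the $q_i$-primary part of $\F_p^*$, and then combining them. Concretely, recall that $\F_p^*$ is cyclic of order $p-1 = \prod_{i=1}^r q_i^{\alpha_i}$, so by the structure of cyclic groups an element set $\{g_1,\dots,g_r\}$ generates $\F_p^*$ as soon as, for each $i$, at least one $g_j$ is \emph{not} a $q_i$-th power residue modulo $p$. Indeed, the $q_i$-th powers form the unique index-$q_i$ subgroup $H_i$, and $\{g_1,\dots,g_r\}$ generates the whole group precisely when the $g_j$ do not all lie in any single $H_i$; it therefore suffices to guarantee for each $i$ one element avoiding $H_i$.

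\medskip

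The key step is thus to show that for each prime $q_i \mid p-1$ there exists an integer $n_i \le p^{1/4e^{1/2}+\epsilon}$ that is a $q_i$-th power non-residue. This is exactly the content made available by Burthe's result quoted above, combined with the Vinogradov sieving / Burgess machinery: since $\langle 1,\dots,N\rangle$ with $N = p^{1/4e^{1/2}+\epsilon}$ generates all of $\F_p^*$, the set $\{1,\dots,N\}$ cannot be contained in $H_i$ for any $i$, so for each $i$ there is some $n_i \le N$ with $n_i \notin H_i$. First I would fix $\epsilon>0$ and set $N = p^{1/4e^{1/2}+\epsilon}$; then for each of the $r=\omega(p-1)$ primes $q_i$ I select one such non-residue $n_i \in \{1,\dots,N\}$. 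The resulting collection $\{n_1,\dots,n_r\}$ has at most $\omega(p-1)$ elements and, by the cyclic-group criterion above, escapes every maximal subgroup, hence generates $\F_p^*$.

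\medskip

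The main obstacle is the existence of a single small $q_i$-th power non-residue for each prime $q_i \mid p-1$, but this is precisely where Burgess's character sum bound does the work and is encapsulated in Burthe's theorem: the nonvanishing of the relevant incomplete character sum over the multiplicative characters of order $q_i$ forces a non-residue below $p^{1/4\sqrt e + \epsilon}$. One should be slightly careful that the extraction of the individual non-residue from Burthe's generating statement is immediate — it is, since membership of $\{1,\dots,N\}$ in a single $H_i$ would contradict $\langle 1,\dots,N\rangle = \F_p^*$. The only remaining bookkeeping is to confirm that distinct $q_i$ may reuse overlapping witnesses but that in the worst case we use one witness per prime factor, giving the clean bound $\omega(p-1)$; no quantitative optimization is needed here, so the argument is genuinely elementary once Burthe's result is granted.
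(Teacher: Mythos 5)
Your proof is correct, and while its skeleton --- one $q_i$-th power non-residue below $p^{1/4\sqrt{e}+\epsilon}$ for each prime $q_i \mid p-1$ --- matches the paper's, both supporting steps are carried out differently. For the existence of the small non-residues, the paper applies Burgess's character sum bound together with the Vinogradov trick directly, whereas you deduce it from Burthe's generating theorem: since $\langle 1,\dots,N\rangle = \F_p^{*}$ with $N = p^{1/4\sqrt{e}+\epsilon}$, the interval $\{1,\dots,N\}$ cannot lie inside the index-$q_i$ subgroup $H_i$ of $q_i$-th powers. This is logically sound (Burthe's result is quoted as established prior work), though you should be aware it outsources rather than avoids the analytic content --- Burthe's theorem is itself proved by essentially the Burgess--Vinogradov machinery the paper uses, so your version trades a direct argument for a citation to a strictly stronger statement. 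For the combining step, the paper fixes a primitive root $g$, writes $x_i = g^{\beta_i}$ with $\gcd(\beta_i, q_i)=1$, observes that $\gcd(\beta_1,\dots,\beta_r)$ is then coprime to $p-1$, and uses Bezout to produce an explicit primitive root $x_1^{l_1}\cdots x_r^{l_r}$; you instead invoke the structural fact that a subset of a finite group generates if and only if it escapes every maximal subgroup, the maximal subgroups of the cyclic group $\F_p^{*}$ being exactly the subgroups $H_i$ of prime index $q_i$. Your route is cleaner and more elementary (no Bezout, no choice of primitive root), and it correctly handles the prime powers $q_i^{\alpha_i}$ automatically; the paper's route buys slightly more, namely an explicit primitive root expressed as a monomial in the $\omega(p-1)$ witnesses, which your purely subgroup-theoretic argument does not exhibit.
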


\begin{proof} Classically, using Burgess' character sums inequality (see \cite{Burgess}) combined with ``Vinogradov trick" (see \cite{Vino2}, \cite{Vino1}), we can pick up $x_1,\dots,x_r<p^{1/4e^{1/2}+\epsilon}$ such that $x_i$ is not a $q_i^{th}$ residue modulo $p$. Fixing $g$ a primitive root, we have $x_i=g^{\beta_i}$ with $\gcd(\beta_i,q_i)=1$. Thus, $\gcd(\beta_1,\dots,\beta_r)$ is coprime to $p-1$. By Bezout's theorem, there exists integers $l_1,\dots l_r$ such that $\displaystyle{\sum_{i=1}^{r} l_i\beta_i}$ is coprime to $p-1$. Hence, $x_1^{l_1}\dots x_r^{l_r}$ is a primitive root of $\mathbb{F}_{p}^{*}$ and the statement is proved.    \end{proof}

 In this note, we wonder if we could improve on this bound which means require less small elements to generate the full group. We will not be able to do this in full generality, the result depending on the anatomy of $p-1$. To measure this, we introduce the following definition.
 
 \begin{definition*} Let $l\geq 1$, we denote by $\omega_l(n)=\#\{q \text{ prime }, q\vert n, q\leq (\log (n+1))l^l\}$ the number of ``small" prime divisors of $n$.   \end{definition*}

  In the rest of the paper, $\log_k x$ will denote the $k$ times iterated logarithm when $k$ is an integer. Using a combinatorial argument and recent development in sieve theory in non regularly distributed sets, we prove in Section $3$:
 
 \begin{theorem*} Let $l:=l(p)\geq 1$ a parameter tending to infinity with $p$ such that $l\leq \frac{\log p}{1000 \log_2 p}$. We need $O\left(\omega_l(p-1) + \frac{\omega(p-1)}{\log l}\right)$ elements smaller than $p^{1/4\sqrt{e}+\epsilon}$ to generate the multiplicative group $\mathbb{F}_{p}^{*}$ where the implied constant is effective. \end{theorem*}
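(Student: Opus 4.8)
The plan is to recast ``generating $\mathbb F_p^\ast$'' as a covering problem for the prime divisors of $p-1$, and then to treat the small and the large divisors by entirely different means; all the saving over Lemma~\ref{elementary} will come from the large primes.

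\emph{Reduction.} Fix a primitive root $g$ and let $\mathrm{ind}(a)$ denote the index of $a$. As $\mathbb F_p^\ast$ is cyclic, its maximal subgroups are precisely the groups of $q$-th powers, one for each prime $q\mid p-1$, so a set $S\subseteq\{1,\dots,N\}$ generates $\mathbb F_p^\ast$ if and only if it lies in none of them. Equivalently, for every prime $q\mid p-1$ some $x\in S$ must fail to be a $q$-th power, i.e. $q\nmid\mathrm{ind}(x)$; equivalently still, $\gcd\{\mathrm{ind}(x):x\in S\}$ is coprime to $p-1$, which is the B\'ezout condition underlying Lemma~\ref{elementary}. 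Calling $x$ a \emph{cover} of $q$ when $x$ is a non-$q$-th power, I must exhibit a small subset of $\{1,\dots,N\}$ covering every prime divisor of $p-1$.

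\emph{Splitting the primes.} Set $Y=(\log p)\,l^{l}$ and split the primes $q\mid p-1$ into the $\omega_l(p-1)$ small ones $q\le Y$ and the large ones $q>Y$. For each small $q$ I would invoke the Burgess--Vinogradov argument of Lemma~\ref{elementary} once: there is a non-$q$-th power in $\{1,\dots,p^{1/4\sqrt e+\epsilon}\}$, and collecting one witness per small prime spends $\omega_l(p-1)$ elements. The hypothesis $l\le\tfrac{\log p}{1000\log_2 p}$ is exactly what guarantees $\log Y=\log_2 p+l\log l\le\tfrac1{500}\log p$, so that $Y\le p^{1/500}$ and the Vinogradov range $p^{1/4\sqrt e+\epsilon}$ stays available.

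\emph{Large primes are sparse.} The key structural fact is that, since each large prime exceeds $Y$ and their product divides $p-1$,
\begin{equation*}
\sum_{\substack{q\mid p-1\\ q>Y}}\frac1q\le\frac{\omega(p-1)}{Y}\le\frac{\log p}{(\log p)\,l^{l}}=l^{-l}
\end{equation*}
is negligible. I would exploit this at the Burgess level $x_0=p^{1/4+\epsilon}$, where a single non-principal character sum $\sum_{n\le x_0}\chi(n)$ does save a power of $x_0$, by counting $N$-smooth integers $m\le x_0$ in the manner of Vinogradov's trick. Their number is $\asymp x_0\rho(\sqrt e)\gg x_0$, whereas those that are $q$-th powers for some large $q$ number at most $x_0\sum_{q>Y}1/q+(\text{error})\le x_0\,l^{-l}+(\text{error})$; granting control of the error, a positive proportion of these smooth integers are simultaneous non-$q$-th powers for \emph{every} large $q$. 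Fixing such an $m$, the identity $\chi_q(m)=\prod_{\ell\mid m}\chi_q(\ell)^{e_\ell}\neq1$ forces, for each large $q$, some prime factor $\ell\mid m$ with $\chi_q(\ell)\neq1$; since every such $\ell$ is $\le N$, the distinct prime factors of a single witness $m$ already cover all the large primes.

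\emph{From here to $O(\omega(p-1)/\log l)$, and the main obstacle.} A generic such witness carries $\asymp\log_2 x_0$ prime factors, far too many, so the point is to produce a witness with few --- hence large --- prime factors, each of which then covers a whole packet of large primes. Concretely I would count $N$-smooth $m\le x_0$ that are simultaneous non-$q$-th powers for all large $q$ and have at most $K=O(\omega(p-1)/\log l)$ distinct prime factors, and show this count positive; its prime factors, all $\le N$, then form a covering set of the required size. The balance determining $K$ is that one needs enough such integers for the (tiny, $\asymp l^{-l}$) removed proportion plus the remainder to leave a positive main term, and it is this remainder that is the crux: one must bound, uniformly over the $\asymp\omega(p-1)$ characters $\chi_q$ with $q>Y$ at once, the multilinear character sums attached to integers with few prime factors. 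This is precisely a sieve in the non-regularly distributed set carved out by the $\chi_q$, bootstrapped from Burgess at $p^{1/4+\epsilon}$ through the multiplicativity of Vinogradov's trick; balancing its effective remainder against the main term is what both fixes the packet size $\asymp\log l$ and yields the effective implied constant. I expect this uniform, effective remainder estimate to be the genuine difficulty, the covering combinatorics being routine once it is available.
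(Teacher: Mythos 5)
Your opening reduction, the splitting of the prime divisors of $p-1$ at $Y=(\log p)l^l$, and the treatment of the $\omega_l(p-1)$ small primes by one Burgess--Vinogradov witness apiece all match the paper. The divergence --- and the genuine gap --- is in the large primes. You attempt to cover \emph{all} large $q\mid p-1$ at once by the prime factors of a single $N$-smooth witness $m\le x_0=p^{1/4+\epsilon}$ having at most $K=O(\omega(p-1)/\log l)$ distinct prime factors, and you yourself flag the required input --- cancellation, uniformly over the up to $\omega(p-1)$ characters $\chi_q$, in character sums over integers with $K$ prime factors --- as an expectation rather than a proof. This is not a routine remainder term: the prime factors of such an $m$ have size about $x_0^{1/K}=p^{(1/4+\epsilon)/K}$, which for $K\ge 2$ lies strictly below the Burgess--Vinogradov range $p^{1/4\sqrt e+\epsilon}$, and for a single character of large prime order nothing unconditional prevents \emph{every} prime up to that height from being a $q$-th residue (this is exactly the least-non-residue obstruction), so no positive-proportion count of non-residue almost-primes can be extracted. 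The crude counting you sketch also fails quantitatively: the density of excluded integers is $\approx\sum_{q>Y}1/q$, of size roughly $l^{-l}/\log_2 p$, whereas when $\omega(p-1)$ is bounded your pool of witnesses with $K=O(1)$ prime factors has density $\ll(\log_2 x_0)^{O(1)}/\log x_0$, which is the \emph{smaller} quantity once $l$ tends to infinity slowly (say $l=\log_4 p$); so without the unproven correlation estimate the main term can be swamped by the removed set. Your first stage (one witness with unboundedly many prime factors, covering all large $q$) is sound but only yields $O(\log p)$ generators, worse than Lemma \ref{elementary}.

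The paper avoids this obstruction entirely, and never uses a character sum for the large primes. Proposition \ref{blocsk} handles only $k\le\frac{\log l}{4}$ large primes at a time and argues by contradiction at the level of the \emph{full} interval $[1,p]$: if no $n\le N$ is a simultaneous non-residue for $q_1,\dots,q_k$, then the primes up to $N$ are covered by the $k$ classes $\mathcal{P}_i$ of $q_i$-th residues; by Mertens one class satisfies $\sum_{x^{1/v}<q\le x^{1/u}}\frac1q\ge\frac{1}{2k}\log(v/u)\ge\frac{1+\epsilon}{u}$, so the Matom\"aki--Shao theorem (Theorem \ref{th:MT}, with $v=l$) produces $\Psi(p;\mathcal{P}_i)\gg l^{-l}p/\log p$ integers up to $p$ composed solely of primes in $\mathcal{P}_i$; by multiplicativity these are all $q_i$-th residues modulo $p$, of which the full period contains at most about $p/q_i$ --- an exact count requiring no analytic input --- and this contradicts $q_i>(\log p)l^l$. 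Blocks of size $k$ then cost one element each, giving the $\frac{\omega(p-1)-\omega_l(p-1)}{\log l}$ term. In short, the sieve theorem does precisely the job your unproven multilinear estimate was meant to do, by moving the residue count from the short interval $[1,x_0]$, where equidistribution below the Burgess range is unknown, to the full period, where it is trivial; your closing guess that the packet size should be $\asymp\log l$ is correct, but realizing it requires replacing your few-prime-factor witness by the paper's block-by-block simultaneous non-residue, i.e.\ reproving Proposition \ref{blocsk}.
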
 We will also give a more precise result of this type and deduce, in the last part of the paper, stronger results for almost all primes. In the next section, we recall some sieve results that we will use in our argument.

\section{Sieve fundamental result}

In this section, we will use the notations and recall the setting of \cite{Matomakisieve}. Let $\mathbb{P}$ be the set of all primes and let $\mathcal{P} \subseteq \mathbb{P}$ be a subset of the primes $\leq x$. The most basic sieving problem is to estimate
$$
\Psi(x; \mathcal{P}) := \#\{ n \leq x \colon p \mid n \implies p \in \mathcal{P}\}.
$$
In other words we sieve the integers in $[1, x]$ by the primes in $\mathcal{P}^c = (\mathbb{P} \cap [1,x]) \setminus \mathcal{P}$. A simple inclusion-exclusion argument suggests that $\Psi(x; \mathcal{P})$ should be approximated by
$$
x \prod_{p \in \mathcal{P}^c} \left(1-\frac{1}{p}\right).
$$
This is always an upper bound, up to a constant, and a lower bound, up to a constant, if $\mathcal{P}$ contains all the primes larger than $x^{1/2-o(1)}$. 
 On the other hand, there are examples where $\Psi(x; \mathcal{P})$ is much smaller than the expected lower bound. For instance if one fixes $u \geq 1$ and lets $\mathcal{P}$ consist of all the primes up to $x^{1/u}$, then the prediction is about $x/u$ whereas, by an estimate for the number of smooth numbers, we know that $\Psi(x; \mathcal{P}) = \rho(u) x$ with $\rho(u) = u^{-u(1+o(1))} $ as $u \rightarrow \infty$, which is much smaller for large $u$.

The first ones to study what happens if one also sieves out some primes from $[x^{1/2}, x]$ were Granville, Koukoulopoulos and Matom\"aki~\cite{sieveworks}. They conjectured that the critical issue is to understand what is the largest $y$ such that
\begin{equation}
\label{eq:sum1/pydef}
\sum_{\substack{p \in \mathcal{P} \\ y \leq p \leq x^{1/u}}} \frac{1}{p} \geq \frac{1+\varepsilon}{u}.
\end{equation}
More precisely, they conjectured that when this inequality holds, the sieve works about as expected. On the other hand they gave examples with
$$
\sum_{y \leq p \leq x^{1/u}} \frac{1}{p} = \frac{1-\varepsilon}{u}
$$
such that $\Psi(x; \mathcal{P})$ is much smaller than expected.

We will use the following result proved by Matom\"aki and Shao confirming that conjecture:

\begin{theorem}
\label{th:MT}\cite[Theorem 1.1]{Matomakisieve}
Fix $\varepsilon>0$. If $x$ is large and $\mathcal{P}$ is a subset of the primes $\le x$ for which there are some $1 \leq u \leq v\le \frac{\log x}{1000 \log_2 x}$ with
$$
\sum_{\substack{ p\in \mathcal{P} \\ x^{1/v}<p\leq x^{1/u}}} \frac 1p \geq \frac{1+\varepsilon}{u},
$$
then
$$
\frac{\Psi(x;\mathcal{P})}{x} \geq A_v \prod_{p\in \mathcal{P}^c}  \left( 1 -\frac 1p \right),
$$
where $A_v$ is a constant with $A_v = v^{-v(1+o_\varepsilon(1))}$ as $v \to \infty$. If $u$ is fixed, one can take $A_v = v^{-e^{-1/u} v(1+o_\varepsilon(1))}$ as $v \to \infty$.
\end{theorem}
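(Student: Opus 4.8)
The plan is to prove the lower bound on $\Psi(x;\mathcal{P})$, since the matching upper bound $\Psi(x;\mathcal{P})\ll x\prod_{p\in\mathcal{P}^c}(1-1/p)$ is the classical fundamental lemma of the combinatorial sieve and holds with no hypothesis. Write $y=x^{1/v}$ and $z=x^{1/u}$ and put $\mathcal{Q}=\mathcal{P}\cap(y,z]$, so the hypothesis reads $\sum_{p\in\mathcal{Q}}1/p\ge(1+\varepsilon)/u$. Since the target product satisfies $\prod_{p\in\mathcal{P}^c}(1-1/p)\le\prod_{p\le y,\,p\notin\mathcal{P}}(1-1/p)$, it suffices to produce the stronger bound $\Psi(x;\mathcal{P})\ge A_v\,x\prod_{p\le y,\,p\notin\mathcal{P}}(1-1/p)$; in other words the construction need not ``pay'' for the (possibly adversarial) sieving carried out at primes above $y$. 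The idea is to manufacture $\mathcal{P}$-smooth integers by gluing a smooth core, built from the small primes of $\mathcal{P}$, to a short product of primes drawn from the good range $\mathcal{Q}$.

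Concretely, I would count integers $n=m\,p_1\cdots p_k\le x$ in which $p_1<\cdots<p_k$ are distinct primes of $\mathcal{Q}$ and $m$ is $\mathcal{P}$-smooth with every prime factor at most $y$. Each such $n$ satisfies $p\mid n\Rightarrow p\in\mathcal{P}$, and a short injectivity check — recovering $p_1\cdots p_k$ as the product of the primes in $(y,z]$ dividing $n$ to the first power — shows that distinct admissible data $(m;p_1,\dots,p_k)$ yield distinct $n$. Hence $\Psi(x;\mathcal{P})$ is bounded below by the number of such data, which factorises into two independent estimates. For the core: whenever $k\le(1-\delta)u$ the room $x/a$ left by $a=p_1\cdots p_k\le z^{k}$ is a fixed power of $x$, so the fundamental lemma applies to $\{m\le x/a:P(m)\le y,\ p\mid m\Rightarrow p\in\mathcal{P}\}$ and gives a count $\gg\rho(s_k)\,(x/a)\prod_{p\le y,\,p\notin\mathcal{P}}(1-1/p)$, where $s_k=\log(x/a)/\log y$ and $\rho$ is the Dickman function encoding the smoothness constraint $P(m)\le y$. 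For the $\mathcal{Q}$-part: summing the weights $1/a$ over squarefree products of exactly $k$ primes of $\mathcal{Q}$ produces $\frac1{k!}\big(\sum_{p\in\mathcal{Q}}1/p\big)^k\ge\frac1{k!}\big((1+\varepsilon)/u\big)^k$ after a harmless partial summation, all primes of $\mathcal{Q}$ having comparable logarithmic size.

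Combining the two estimates yields
\[
\Psi(x;\mathcal{P})\ \gg\ x\prod_{p\le y,\ p\notin\mathcal{P}}\Big(1-\tfrac1p\Big)\ \sum_{k\le(1-\delta)u}\frac{\rho(s_k)}{k!}\Big(\frac{1+\varepsilon}{u}\Big)^{k},
\]
so everything reduces to bounding the inner sum below by $A_v$. This is a pure optimisation: the sum is dominated by $k$ close to $u$, where Stirling converts $\frac1{k!}((1+\varepsilon)/u)^k$ into the exponentially small weight driving the loss, and balancing this against the Dickman factor $\rho(s_k)$ — whose argument $s_k=v(1-\theta k/u)$ interpolates between $0$ and $v$ as the $p_i$ range over $(y,z]$ — is exactly what manufactures the stated $A_v=v^{-v(1+o_\varepsilon(1))}$. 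When $u$ is held fixed the admissible products use only about $u$ large primes, so the Dickman loss is felt on the shorter scale $e^{-1/u}v$ rather than $v$, which is the source of the refined exponent $A_v=v^{-e^{-1/u}v(1+o_\varepsilon(1))}$.

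The main obstacle is precisely this sharp extraction of the constant together with the fact that the hypothesis is assumed in a single range $(y,z]$ while $\mathcal{P}$ may be completely arbitrary elsewhere. Getting the exponent right (and not a lossier $v^{-Cv}$) forces one to track the Dickman weights $\rho(s_k)$ uniformly across all admissible $k$ and to iterate the construction: one peels off one prime of $\mathcal{Q}$ at a time in a Buchstab-type recursion, reducing $\Psi(x;\mathcal{P})$ to the analogous quantity on a shorter interval and controlling how the smoothness losses compound through the iteration. Keeping the injectivity clean, ensuring the room $x/a$ stays a genuine power of $x$ at each step, and proving that the per-step losses telescope to the single factor $A_v$ rather than accumulating, is where the real work of \cite{Matomakisieve} lies.
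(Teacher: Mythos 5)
First, a point of order: the paper does not prove this statement at all --- Theorem \ref{th:MT} is imported verbatim as an external tool from \cite[Theorem 1.1]{Matomakisieve}, so there is no internal proof to compare against, and your attempt has to be judged as a proof of the Matom\"aki--Shao theorem itself. Judged that way, it has a genuine gap, and it sits exactly where you invoke the fundamental lemma for the ``core''.

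The claimed estimate $\#\{m\le x/a:\ P(m)\le y,\ p\mid m\Rightarrow p\in\mathcal{P}\}\gg\rho(s_k)\,(x/a)\prod_{p\le y,\,p\notin\mathcal{P}}(1-1/p)$ is both technically unavailable and circular. Technically: the condition $P(m)\le y$ amounts to sieving out \emph{every} prime in $(y,\,x/a]$, far outside the fundamental lemma's regime (it gives lower bounds only when all sifting primes are at most $X^{1/s}$, with loss $s^{-s}$); the factor $\rho(s_k)$ is a smooth-number phenomenon coming from Dickman--de Bruijn theory, not a sieve output, and that theory does not tolerate deleting an arbitrary subset of the primes below $y$. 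Circularly: the bound you assert is itself a ``the sieve works as expected'' statement at scale $y$, for a set $\mathcal{P}\cap[1,y]$ about which the theorem's hypothesis says nothing --- the $(1+\varepsilon)/u$ assumption constrains only $(x^{1/v},x^{1/u}]$, while $\mathcal{P}$ below $y$ may be structured adversarially, and the examples of Granville--Koukoulopoulos--Matom\"aki \cite{sieveworks} show that precisely this heuristic $\rho(s)\,X\prod(1-1/p)$ can fail badly for such sets. Since the extremal configurations that force the small constant $A_v$ live in this adversarial-core regime, assuming the core behaves heuristically assumes away the hard case; this is why your final optimisation appears to produce constants better than the stated ones in some configurations, a sign the accounting cannot be trusted.

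The degenerate case makes the gap vivid: nothing forbids $\mathcal{P}\cap[1,y]=\emptyset$, in which case your core consists of $m=1$ alone, the factorised lower bound collapses, and the entire mass of $\Psi(x;\mathcal{P})$ must come from packing products $p_1\cdots p_k\le x$ of primes drawn solely from $(y,z]$. That packing/counting problem --- showing the $(1+\varepsilon)/u$ margin produces enough distinct products to fill $[1,x]$ up to the loss $A_v$, uniformly in how the $1/p$-mass is distributed inside $(y,z]$, and sharply enough that the conclusion fails at $(1-\varepsilon)/u$ as it must --- is the actual content of \cite{Matomakisieve}, and your sketch relegates it to an unproven closing paragraph about a ``Buchstab-type recursion'' with ``telescoping losses''. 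A smaller but related defect: your displayed bound treats $s_k$ as a function of $k$ alone, whereas $a$ ranges over all of $(y^k,z^k]$ and the trade-off between $k$, the location of the $1/p$-mass, and the $\rho$-factor is exactly the optimisation that determines $A_v$; flattening it to one number per $k$ erases the mechanism that produces $v^{-v(1+o_\varepsilon(1))}$ versus $v^{-e^{-1/u}v(1+o_\varepsilon(1))}$.
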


\section{Idea of the method and main results}

\begin{definition*}We define $h(p)$ as the number of elements smaller than $p^{1/4\sqrt{e}+\epsilon}$ which are sufficient to generate the multiplicative group $\mathbb{F}_{p}^{*}$.  \end{definition*}

 The aim of this note is to give improvements on the size of $h(p)$. The main idea is the following: due to the sparsity of powers, for large divisors $q_1,q_2$ of $p-1$, a non $q_1^{th}$ residue will be more likely a non $q_2^{th}$ residue. Thus, we do not need to pick up a non-residue for every power as it is done in Lemma \ref{elementary} and we can further play this game with more divisors in order to decrease the number of necessary steps in the argument. In order to do that, we will use the result on the sieve recalled in previous section. The dependance on $v$ in the lower bound of Theorem \ref{th:MT} will prevent us to regroup as much divisors as we want, thus we will carefully split the set of prime divisors in blocks of size $k$ with an ``optimal" value of $k$ coming from the application of Theorem \ref{th:MT}.

Given a parameter $l \geq 1$, we obtain a bound for $h(p)$ depending on $\omega_l(p-1)$. If for some relatively large $l$, $\omega_l(p-1)$ is small, this will give a significant improvement on the trivial bound $\omega(p-1)$ coming from Lemma \ref{elementary}.

The next result is the main tool that we are going to use to deduce to derive these improvements. It shows that we can handle several large prime divisors of $p-1$ simultaneously.

\begin{prop}\label{blocsk}\textnormal{[\bf{Main proposition}]}
Let $l:=l(p)\geq 1$ a parameter tending to infinity with $p$ and $k$ an integer verifying $k\leq \frac{\log l}{4}$. Moreover, assume that $l\leq \frac{\log p}{1000 \log_2 p}$. Suppose that $q_1,\dots,q_k$ are prime divisors of $p-1$ greater than $(\log p)l^l$. Then, for $p$ sufficiently large, there exists an integer $n\leq N=p^{1/4\sqrt{e}+\epsilon}$ which is a non $q_i^{th}$ residue for $i=1,\dots,k$.
\end{prop}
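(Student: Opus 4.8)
The plan is to recast the residue conditions additively and then build the desired integer multiplicatively. Fix a primitive root $g$ and write $n=g^{a}$; since $q_1,\dots ,q_k\mid p-1$ are distinct primes, the Chinese Remainder map
\begin{equation}
\Phi\colon \mathbb{F}_p^{*}\longrightarrow \prod_{i=1}^{k}\mathbb{Z}/q_i\mathbb{Z},\qquad \Phi(g^{a})=(a\bmod q_1,\dots ,a\bmod q_k),
\end{equation}
is a surjective homomorphism, and $n$ is a non $q_i^{th}$ residue precisely when the $i$-th coordinate of $\Phi(n)$ is nonzero. Thus I want an $n\le N$ with $\Phi(n)$ having all coordinates nonzero. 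The feature I would exploit is multiplicativity, $\Phi(mn)=\Phi(m)+\Phi(n)$: if $n\le N$ is a product of primes all lying in one fixed class $v$ with nonzero coordinates, then $\Phi(n)=\Omega(n)\,v$, and because $\Omega(n)\le \frac{\log N}{\log 2}\ll \log p<(\log p)\,l^{l}\le q_i$ no coordinate can vanish. Hence controlling the $\Phi$-classes of the prime factors of $n$ controls its simultaneous residue type, and the task becomes to produce an $n\le N$ whose prime factors have classes summing to a vector with no zero coordinate.

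First I would record why the exponent is $1/4\sqrt e$, via Vinogradov's trick. At the Burgess scale $M:=p^{1/4+\epsilon}$ the relevant character sums are controlled: for any nontrivial $\chi=\prod_i\psi_i^{j_i}$ (a character of order dividing $q_1\cdots q_k$) Burgess' inequality gives $\sum_{m\le M}\chi(m)\ll Mp^{-\delta}$, whence by inclusion--exclusion the number of simultaneous non residues up to $M$ is $(1+o(1))\,M\prod_{i=1}^{k}(1-1/q_i)\gg M$; the roughly $2^{k}$ expanded terms are negligible against this saving since $k$ is small. The transition from $M$ down to $N$ is governed by smooth numbers: with $u=\log M/\log N\to\sqrt e$ one has $\rho(\sqrt e)=1-\log\sqrt e=\tfrac12$, so it is exactly at the exponent $1/4\sqrt e$ that the smooth count $M\rho(u)$ matches the residue count, which heuristically explains why the construction can succeed below $p^{1/4}$ but not arbitrarily far below it.

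The reason Theorem \ref{th:MT} is needed, rather than a classical smooth-number estimate, is that the integers I must produce are not merely $N$-smooth: their prime factors must lie in an irregular set $\mathcal{P}$ of primes selected by their $\Phi$-class (the non residues for the $q_i$), which is not a union of arithmetic progressions, so the heuristic count $x\prod_{p\in\mathcal{P}^{c}}(1-1/p)$ is not delivered by elementary sieving below $p^{1/2}$. I would apply Theorem \ref{th:MT} with $x=N$ and $v\asymp l$ --- this is precisely why $l\le \frac{\log p}{1000\log_2 p}$ is assumed, matching the admissible range there --- to obtain
\begin{equation}
\Psi(N;\mathcal{P})\ \ge\ A_v\,N\prod_{p\in\mathcal{P}^{c}}\Bigl(1-\frac1p\Bigr),\qquad A_v=v^{-v(1+o(1))}\approx l^{-l}.
\end{equation}
Combined with the $\Phi$-bookkeeping of the first paragraph, the aim is that a positive proportion of the integers so produced fall into the all-nonzero region, yielding the sought simultaneous non residue $n\le N$.

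The main obstacle is quantitative and lies in the interplay of the last two steps. The sieve lower bound carries the minuscule constant $A_v\approx l^{-l}$, so to keep the main term dominant I need the product over $\mathcal{P}^{c}$ to lose almost nothing: the threshold $q_i\ge(\log p)\,l^{l}\approx(\log p)/A_v$ is calibrated exactly so that $A_v q_i\ge\log p\to\infty$ absorbs the per-prime loss, while $k\le\tfrac14\log l$ keeps the combinatorial factor $C^{k}$ coming from the $k$ simultaneous conditions below the room $l^{1/4}$ left by the sieve. The step I expect to be hardest is verifying the hypothesis of Theorem \ref{th:MT} for the irregular set $\mathcal{P}$ --- showing the good primes have large enough reciprocal sum in a dyadic window $[N^{1/v},N^{1/u}]$ lying below the Burgess range --- and then matching this with the coset structure of the $\Phi$-values, so that the residue conditions and the sieve lower bound are satisfiable simultaneously; this is the precise point where the combinatorial splitting into blocks of size $k$ must be organised.
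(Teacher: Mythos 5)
There is a genuine gap, and it sits exactly where you flagged the ``hardest step'': the hypothesis of Theorem \ref{th:MT} can never be verified for the set $\mathcal{P}$ your construction requires. If the prime factors of $n$ must all lie in a \emph{single fixed} class $v$ of $\prod_{i=1}^k \mathbb{Z}/q_i\mathbb{Z}$ (which is what makes your bookkeeping $\Phi(n)=\Omega(n)\,v$ work), then even under perfect equidistribution that class carries only a proportion $\approx 1/(q_1\cdots q_k)$ of the primes, so its reciprocal sum in any window $(x^{1/v},x^{1/u}]$ is about $\log(v/u)/(q_1\cdots q_k)$; with $q_i\geq (\log p)l^l$ this is astronomically smaller than the required $(1+\epsilon)/u$, so Theorem \ref{th:MT} simply does not apply. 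Worse, no unconditional lower bound is known for primes $\leq N<p^{1/4}$ in a prescribed power-residue class modulo $p$: Burgess controls character sums over integers, not over primes, and Dirichlet/Linnik-type equidistribution is hopeless at this scale. Relaxing to the larger set of all simultaneous non-residue primes does not rescue the plan either, since products of such primes land in arbitrary classes, and Theorem \ref{th:MT} yields only a count of $\mathcal{P}$-smooth integers with no equidistribution of $\Phi$-values, so you cannot conclude that a positive proportion of them avoids every zero coordinate.

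The paper's proof runs in the opposite direction and thereby sidesteps every one of these obstructions: it argues by \emph{contradiction} and sieves at scale $x=p$, not $x=N$. Assume no $n\leq N$ is a simultaneous non-residue; then every prime $q\leq N$ is a $q_i$th residue for some $i$, i.e.\ $\mathcal{P}=\bigcup_{i=1}^k\mathcal{P}_i$ with $\mathcal{P}_i$ the $q_i$th-residue primes. Since this union covers \emph{all} primes in the window $(p^{1/v},p^{1/u}]=(p^{1/v},N]$, Mertens plus pigeonhole hands you for free an index $i$ with $\sum_{q\in\mathcal{P}_i}1/q\geq \frac{1}{2k}\log(v/u)$, which meets the hypothesis of Theorem \ref{th:MT} precisely because $k\leq\frac{\log v}{4}$ --- no information about primes in non-residue classes is ever needed. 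The sieve then gives $\Psi(p;\mathcal{P}_i)\gg v^{-v}p/\log p$, while every $\mathcal{P}_i$-smooth integer up to $p$ is a product of $q_i$th residues, hence itself a $q_i$th residue, of which there are trivially at most about $p/q_i$ in $[1,p]$ by the subgroup-index bound; taking $v=l$ and $q_i>(\log p)l^l$ yields the contradiction. Note that the choice $x=p$ is essential: in $[1,N]$ with $N<p^{1/4}$ one could not even bound the number of $q_i$th residues, since Burgess fails below $p^{1/4}$, whereas in $[1,p]$ the bound $p/q_i$ costs nothing. Your reduction via the map $\Phi$ and your identification of multiplicative closure as the operative structure are sound instincts, and your calibration remarks about $A_v\approx l^{-l}$ against $q_i\geq(\log p)l^l$ do match the paper's final comparison; but without the contradiction--pigeonhole inversion the sieve input cannot be certified, and the constructive argument as sketched does not close.
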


\begin{proof}
 Define $S=\{1\leq n\leq N \text{ s.t. }  n\text{ is a non } q_i^{th} \text{-residue modulo }p\text{ for } i=1,\dots,k\}$ and suppose that $S=\emptyset$ which means that every integer in this interval is $q_i^{th}$ residue modulo $p$ for at least one $i$. Thus, we have in particular \begin{equation}\label{decompprimes}\mathcal{P}=\{q \text{ prime}, 1\leq q\leq N\}= \bigcup_{i=1}^{k} \mathcal{P}_{i}\end{equation}  where $\mathcal{P}_{i}=\mathcal{P} \cap \{ q_i^{th} \text{-residue modulo }p \}  .$ 
For $x$ sufficiently large and $u,v$  parameters to be specified later, we have by Mertens' Theorem that, 

$$ \sum_{q\leq x } \frac{1}{q} = \log_2 x + O(1)$$ and thus $$ \sum_{\substack{ q\in \mathcal{P} \\ x^{1/v}<q\leq x^{1/u}}} \frac 1q \geq \frac{1}{2} \log(v/u) .$$ Consequently, using (\ref{decompprimes}) we get that there exists $i \in \{1,\dots,k\}$ such that

\begin{equation}\label{inverseprimes} \sum_{\substack{ q\in \mathcal{P}_{i} \\ x^{1/v}<q\leq x^{1/u}}} \frac 1q \geq \frac{1}{2k} \log(v/u) .\end{equation} We want to apply Theorem \ref{th:MT}, hence we need the right hand side of (\ref{inverseprimes}) to be larger than $\frac{1+\epsilon}{u}$ under the conditions $1 \leq u \leq v\le \frac{\log x}{1000 \log_2  x}$. Let us fix $u$ such that $\frac{1}{u}=\frac{1}{4\sqrt{e}}+\epsilon$ and $x=p$ so that $N=x^{1/u}$.
 Thus the condition of Theorem \ref{th:MT} is verified as long $k\leq \frac{\log v}{4}$. Therefore, we get 

$$\frac{\Psi(p;\mathcal{P}_{i})}{p} \geq A_v \prod_{q\in \mathcal{P}_{i}^c}  \left( 1 -\frac 1q \right).$$ Using the third Mertens' Theorem, the product is trivially bounded from below by $$\displaystyle \prod_{q \leq p}  \left( 1 -\frac 1q \right)\geq \frac{1}{\log p}$$ for $p$ large enough. Thus, we obtain the inequality $\Psi(p;\mathcal{P}_{i}) \gg A_v(\log p)^{-1}x \gg v^{-v}\frac{x}{\log p}$. On the other hand, we are counting integers less than $p$ which are $q_1^{th}$ residues and so there are at most $p/q_i$ of these. It leads to a contradiction when $v^{-v}(\log p)^{-1} \geq 1/q_i$    or equivalently $q_i\geq (\log p)v^v$. In this case, the set $S$ is non empty and this concludes the proof setting $v=l$.

\end{proof}

This proposition helps us to regroup the divisors in ``blocks" of size $k$.  Using this idea in a simple way, we are able to deduce the result announced in the introduction:

\begin{theorem}\label{general} Let $l:=l(p)\geq 1$ a parameter tending to infinity with $p$ such that $l\leq \frac{\log p}{1000 \log_2 p}$. For $p$ a sufficiently large prime, we have the bound  $$h(p) \ll \omega_l(p-1) + \frac{\omega(p-1)-\omega_l(p-1)}{\log l}$$ where the implied constant is effective. 
\end{theorem}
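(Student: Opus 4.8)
The plan is to combine the elementary non-residue construction of Lemma~\ref{elementary} for the small prime divisors of $p-1$ with the simultaneous non-residue statement of Proposition~\ref{blocsk} for the large ones, and then glue everything together by the very same Bezout argument used in the proof of Lemma~\ref{elementary}. The conceptual gain is that a single element can now kill a whole block of large primes at once, whereas small primes must still be handled one at a time.

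First I would split the factorization $p-1 = q_1^{\alpha_1}\cdots q_r^{\alpha_r}$ into the \emph{small} primes (those $q \leq (\log p) l^l$, of which there are exactly $\omega_l(p-1)$ by definition) and the \emph{large} primes (those $q > (\log p) l^l$, numbering $\omega(p-1)-\omega_l(p-1)$). For each small prime $q_i$, the Vinogradov--Burgess argument of Lemma~\ref{elementary} produces an element $x_i < N = p^{1/4\sqrt{e}+\epsilon}$ that is a non-$q_i^{th}$ residue; this accounts for $\omega_l(p-1)$ elements. For the large primes I would partition them into blocks of size $k = \lfloor (\log l)/4 \rfloor$, the largest value permitted by the hypothesis $k \leq (\log l)/4$ of Proposition~\ref{blocsk}. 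Since every prime in each block exceeds $(\log p) l^l$ and $l \leq \frac{\log p}{1000 \log_2 p}$, the proposition applies to each block $B_j$ and yields a single element $n_j < N$ that is simultaneously a non-$q^{th}$ residue for every $q \in B_j$. The number of blocks is $\lceil (\omega(p-1)-\omega_l(p-1))/k \rceil \ll \frac{\omega(p-1)-\omega_l(p-1)}{\log l} + 1$.

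Next I would verify that the collected elements --- one per small prime and one per block --- generate $\mathbb{F}_p^*$. Fixing a primitive root $g$, write each chosen element as $g^{\delta}$; I claim the gcd of all these exponents is coprime to $p-1$. Indeed, for any prime $q \mid p-1$: if $q=q_i$ is small, then the exponent of the corresponding $x_i$ is coprime to $q_i$ (because $x_i$ is a non-$q_i^{th}$ residue and $q_i$ is prime), while if $q$ is large and lies in block $B_j$, then the exponent of $n_j$ is coprime to $q$. In either case $q$ fails to divide at least one exponent, hence does not divide their overall gcd. So this gcd is coprime to $p-1$, and by Bezout a suitable product of (integer) powers of the chosen elements equals $g$ raised to a power coprime to $p-1$, i.e.\ a primitive root. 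Thus the chosen elements generate $\mathbb{F}_p^*$.

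Finally, counting, the total number of generators is $\omega_l(p-1) + \lceil (\omega(p-1)-\omega_l(p-1))/k\rceil$, which with $k \asymp \log l$ is $\ll \omega_l(p-1) + \frac{\omega(p-1)-\omega_l(p-1)}{\log l} + 1$, and the additive constant is absorbed since $2 \mid p-1$ forces $\omega_l(p-1) \geq 1$. This gives the claimed bound with an effective implied constant. I expect the only genuinely delicate points to be bookkeeping rather than analysis: confirming that small primes must indeed be treated individually (the sieve lower bound inside Proposition~\ref{blocsk} only beats the trivial count $p/q_i$ once $q_i > (\log p) l^l$, so it cannot be invoked below that threshold), and checking that the ceiling and the stray constant are harmless precisely because $\omega_l(p-1) \geq 1$.
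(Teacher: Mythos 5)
Your proof is correct and follows essentially the same route as the paper's: treat the small prime divisors of $p-1$ (those $\leq (\log p)l^l$) individually via the Burgess--Vinogradov argument of Lemma~\ref{elementary}, group the large ones into blocks of size $k \asymp \log l$ handled simultaneously by Proposition~\ref{blocsk}, and glue everything with the Bezout argument. You even supply details the paper leaves implicit --- the gcd/Bezout verification that the collected elements generate $\mathbb{F}_p^*$, and the absorption of the ceiling term via $\omega_l(p-1)\geq 1$ --- and your choice $k=\lfloor(\log l)/4\rfloor$ is the correct reading of the paper's evident typo $k=\frac{\log p}{4}$, which would violate the hypothesis $k\leq\frac{\log l}{4}$ of Proposition~\ref{blocsk}.
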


\begin{proof} Consider the prime divisors of $p-1$ which are greater than $(\log p)l^l$. We can apply the Proposition \ref{blocsk} with $k=\frac{\log p}{4}$ and pick up an integer less than $p^{1/4\sqrt{e}+\epsilon}$ which is a non $q^{th}$ residue for $k$ different large $q$. Regrouping the large divisors of $p-1$ in blocks of size $k$, we have at most $\frac{\omega(p-1)-\omega_l(p-1)}{k}$ of such blocks. This concludes the proof including the contribution of small divisors treated individually using Burgess' character sums inequality combined with ``Vinogradov trick" as in Lemma \ref{elementary}. \end{proof}

\begin{remark} The value of the optimal parameter $l$ is not so clear for a general $p$, it will depends heavily on the repartition of the prime divisors of $p-1$. 
\end{remark} We can in fact iterate in some sense the argument used to prove Theorem \ref{general} and obtain the following stronger result:


\begin{theorem}\label{iteration}
Let $l_n(p), n=0,\dots,N$ be a strictly decreasing sequence of parameters tending to infinity with $p$ such that $(\log p)l_0^{l_0}>p$ and that $l_1\leq \frac{\log p}{1000 \log_2 p}$. Then, for $p$ a sufficiently large prime, we have

$$h(p) \ll  \omega_{l_{N}}(p-1)+\sum_{n=0}^{N-1} \frac{\omega_{l_n}(p-1)-\omega_{l_{n+1}}(p-1)}{\log(l_{n+1})}.      $$

\end{theorem}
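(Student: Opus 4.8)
The plan is to iterate the proof of Theorem \ref{general}, replacing the single cut $(\log p)l^l$ by the whole decreasing family of cuts $T_n := (\log p)\, l_n^{l_n}$. As in Lemma \ref{elementary}, it suffices to produce a set $\mathcal{S}$ of integers below $p^{1/4\sqrt{e}+\epsilon}$ such that for every prime $q\mid p-1$ at least one element of $\mathcal{S}$ is a non-$q^{th}$ residue; a Bézout combination of such elements is then a primitive root, so $\mathcal{S}$ generates $\mathbb{F}_p^*$. Since $l_0>l_1>\dots>l_N$, the cuts satisfy $T_0>T_1>\dots>T_N$, and the hypothesis $(\log p)l_0^{l_0}>p$ forces $T_0>p$, so every prime factor of $p-1$ lies in $[2,T_0]$ and in particular $\omega_{l_0}(p-1)=\omega(p-1)$. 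This stratifies the prime divisors of $p-1$ into the ``small'' ones in $[2,T_N]$, of which there are $\omega_{l_N}(p-1)$, and the shells $(T_{n+1},T_n]$ for $n=0,\dots,N-1$, the $n$-th shell containing exactly $\omega_{l_n}(p-1)-\omega_{l_{n+1}}(p-1)$ primes.

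Each small prime $q\le T_N$ I would handle individually, exactly as in Lemma \ref{elementary}, contributing $\omega_{l_N}(p-1)$ elements. For the large primes I would group them into blocks and feed each block to Proposition \ref{blocsk}. The key observation is that every prime in shell $n$ exceeds $T_{n+1}=(\log p)l_{n+1}^{l_{n+1}}$, so a block of at most $k_n:=\lfloor (\log l_{n+1})/4\rfloor$ such primes meets the hypotheses of Proposition \ref{blocsk} with parameter $l=l_{n+1}$ — the requirement $l_{n+1}\le \frac{\log p}{1000\log_2 p}$ being guaranteed by $l_{n+1}\le l_1$ together with the assumption on $l_1$ — and hence yields a single integer below $p^{1/4\sqrt{e}+\epsilon}$ that is simultaneously a non-$q^{th}$ residue for all $q$ in the block. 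Thus each block costs one element, and the whole problem reduces to bounding the total number of blocks by $\ll \sum_{n=0}^{N-1}\bigl(\omega_{l_n}(p-1)-\omega_{l_{n+1}}(p-1)\bigr)/\log l_{n+1}$.

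The delicate point, and what I expect to be the main obstacle, is the accumulation of rounding across the strata: forming blocks shell by shell leaves one partial block per stratum, which naively costs an extra $+1$ per nonempty shell, up to $+N$ in total, and this would spoil the bound. I would resolve this by processing the shells from coarsest ($n=0$) to finest and carrying the leftover primes of shell $n$ into shell $n+1$. This is legitimate, since a leftover prime of shell $n$ exceeds $T_{n+1}>T_{n+2}$ and therefore still qualifies for a shell-$(n+1)$ block (parameter $l_{n+2}$, size at most $k_{n+1}\le k_n$), so that with this convention every block except the last is full. Assigning to each prime $q$ of shell $n$ the weight $1/k_n$ and using that $k_n$ is non-increasing, every full block — made of $k_m$ primes drawn from shells $\le m$, each of weight $\ge 1/k_m$ — carries total weight at least $1$; as the blocks are disjoint, their number is at most the total weight $\sum_n a_n/k_n$, where $a_n=\omega_{l_n}(p-1)-\omega_{l_{n+1}}(p-1)$. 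Since $l_{n+1}\to\infty$ gives $k_n\asymp \log l_{n+1}$, the number of blocks is $\ll \sum_{n=0}^{N-1} a_n/\log l_{n+1}$ plus the single trailing block; adding the small-prime cost and noting that $2\mid p-1$ always forces $\omega_{l_N}(p-1)\ge 1$, which absorbs the additive $O(1)$, yields the claimed bound for $h(p)$.
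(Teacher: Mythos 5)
Your overall route is the same as the paper's: stratify the prime divisors of $p-1$ by the cuts $(\log p)l_n^{l_n}$, use Proposition \ref{blocsk} with parameter $l=l_{n+1}$ to kill each block of primes from the $n$-th shell at the cost of one integer below $p^{1/4\sqrt{e}+\epsilon}$, treat the primes below $(\log p)l_N^{l_N}$ individually as in Lemma \ref{elementary}, and conclude via the B\'ezout argument. You have moreover put your finger on a genuine issue that the paper's two-line proof silently elides: grouping shell by shell costs $\lceil a_n/k_n\rceil$ blocks, where $a_n=\omega_{l_n}(p-1)-\omega_{l_{n+1}}(p-1)$ and $k_n=\lfloor \log(l_{n+1})/4\rfloor$, and the accumulated ceilings contribute up to $+1$ per nonempty shell, which is \emph{not} dominated by $\sum_n a_n/\log l_{n+1}$ when many shells contain only one or two primes.

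However, your repair does not work as written. The charging inequality is reversed: since the $l_n$ decrease, $k_n$ is \emph{non-increasing} in $n$, so a prime of shell $n\le m$ carries weight $1/k_n\le 1/k_m$, not $\ge 1/k_m$; a full level-$m$ block therefore has total weight \emph{at most} $1$, and the number of blocks is not majorized by $\sum_n a_n/k_n$. This is not a fixable typo, because the carrying scheme itself overshoots when the $k_n$ decay quickly: take two shells with $a_0=k_0-1$, $a_1=1$ and $k_1\approx\sqrt{k_0}$; your rule forms no block at level $0$, carries $k_0-1$ primes down, and then needs about $\sqrt{k_0}\to\infty$ full blocks at level $1$, whereas the claimed right-hand side is $O(1+\omega_{l_N}(p-1))$. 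Flushing leftovers as one partial block at their own level repairs that example at cost $1$, but reinstates exactly the per-shell $+1$ you were trying to avoid. In fact no grouping strategy achieves the stated bound uniformly in the admissible sequences $(l_n)$: with one prime per shell and $k_n$ decreasing geometrically, any covering by blocks needs a number of blocks growing with $N$ (an inverse-tower growth, since a block whose finest prime lies in shell $m$ has size at most $k_m$), while $\omega_{l_N}(p-1)+\sum_n a_n/k_n=O(1)$ there, and nothing in the hypotheses rules such configurations out. What your method (and the paper's sketch, which shares this defect) actually proves is the theorem with an extra additive term $O\left(\min\left(N,\#\{n<N : a_n>0\}\right)\right)$; this weaker form still suffices for Corollary \ref{dyadicappli}, where $N\ll\log_3 p$ is absorbed into the final bound.
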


\begin{proof} We argue similarly as in Theorem \ref{general}, regrouping the divisors of $p-1$ lying in the interval $](\log p)l_{n+1}^{l_{n+1}},(\log p)l_n^{l_n}]$ in blocks of size $k_n \approx \log (l_{n+1})$. The contribution of the remaining small prime divisors is given by $\omega_{l_{N}}(p-1)$.
 \end{proof}

Even though stronger results about primitive roots are known for almost all primes, a result on a set of primes of density $1$ follows as a consequence of Theorem \ref{iteration}.

\subsection{Results for almost all primes}

The next result gives a bound on the number of small prime divisors of $p-1$ for most of the primes $p$.

\begin{lemma}\label{omega} Let $A>1$ and $\epsilon>0$. Suppose $l$ is such that $l^l \ll x^{1/2-\epsilon}$. Then, the set of primes $p\leq x$ such that $p-1$ verifies $\omega_l(p-1) \ll \log l$ is asymptotically of density $1$. \end{lemma}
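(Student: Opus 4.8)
The plan is to recognize this as a normal-order (Tur\'an-type) statement for the shifted primes $p-1$ and to prove it by the second moment method. Set $y := (\log(x+1))\, l^l$, so that $\omega_l(p-1) \le \tilde\omega(p-1) := \#\{q \text{ prime} : q \mid p-1,\ q \le y\}$ for every $p \le x$; since we only want an upper bound it suffices to control $\tilde\omega(p-1)$ with the fixed threshold $y$. The hypothesis $l^l \ll x^{1/2-\epsilon}$ gives $y \ll x^{1/2-\epsilon/2}$, which is the crucial feature: the moduli that appear lie below $x^{1/2}$ (or their large products can be isolated), so equidistribution of primes in arithmetic progressions is available on average. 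I would first record, via Mertens' theorem, the expected size $\mu := \sum_{q \le y} \frac{1}{q-1} = \log\log y + O(1)$, noting that $\log y = l\log l + \log_2 x$, so that in the range of $l$ under consideration $\log\log y \ll \log l$ and the target bound $\tilde\omega(p-1) \ll \mu$ is exactly the assertion $\omega_l(p-1) \ll \log l$. Since $l \to \infty$ we also have $\mu \to \infty$, which is what will push the exceptional density down to $o(\pi(x))$.

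First I would compute the first moment
$$\sum_{p \le x} \tilde\omega(p-1) = \sum_{q \le y} \pi(x; q, 1),$$
and evaluate it by the Bombieri--Vinogradov theorem (legitimate since $y \le x^{1/2-\epsilon/2}$) together with Mertens, obtaining $\pi(x)\big(\mu + o(\mu)\big)$. Next I would treat the second moment by expanding $\tilde\omega(p-1)^2 = \sum_{q_1, q_2 \le y}\mathbf{1}[q_1 \mid p-1]\,\mathbf{1}[q_2 \mid p-1]$ and separating the diagonal $q_1 = q_2$ (contributing $\sum_{q \le y}\pi(x;q,1) \ll \pi(x)\mu$) from the off-diagonal pairs, where $q_1 q_2 \mid p-1$ forces $p \equiv 1 \pmod{q_1 q_2}$ and contributes $\sum_{q_1 \ne q_2}\pi(x; q_1 q_2, 1)$. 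The goal is the Tur\'an--Kubilius-type variance bound
$$\sum_{p \le x}\big(\tilde\omega(p-1) - \mu\big)^2 \ll \pi(x)\,\mu,$$
which holds once the off-diagonal sum matches $\pi(x)\sum_{q_1 \ne q_2}\frac{1}{\phi(q_1)\phi(q_2)}$ up to an admissible error, so that the quadratic main terms cancel against $\mu^2\pi(x)$.

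With the variance bound in hand the conclusion is immediate from Chebyshev's inequality: the number of primes $p \le x$ with $\tilde\omega(p-1) \ge 2\mu$ is at most $\mu^{-2}\sum_{p\le x}\big(\tilde\omega(p-1)-\mu\big)^2 \ll \pi(x)/\mu$, and since $\mu \asymp \log l \to \infty$ this is $o(\pi(x))$. Hence for a set of primes of density $1$ one has $\omega_l(p-1) \le \tilde\omega(p-1) < 2\mu \ll \log l$, which is the claim.

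The main obstacle is the off-diagonal correlation estimate. For products $q_1 q_2 \le x^{1/2-\delta}$ the Bombieri--Vinogradov theorem gives $\pi(x; q_1 q_2, 1) = \operatorname{Li}(x)/\phi(q_1 q_2) + (\text{error small on average})$, which produces exactly the cancellation needed. When $y$ approaches $x^{1/2}$ the products $q_1 q_2$ can exceed the Bombieri--Vinogradov range, and these terms must be bounded directly: such a pair forces at least one prime factor above $x^{1/4}$, and since $\sum_{x^{1/4} < q \le y} 1/q = O(1)$ by Mertens, integers $p-1$ having two prime factors that large are rare; using Brun--Titchmarsh as an upper bound for $\pi(x; q_1 q_2, 1)$, their total contribution to the variance is $O(\pi(x)) \ll \pi(x)\mu$. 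Isolating this high-modulus range and verifying it is genuinely negligible is the delicate point; everything else reduces to Mertens' theorem and bookkeeping.
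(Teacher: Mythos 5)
Your proof is correct, but it takes a genuinely different and stronger route than the paper's. The paper's own argument stops at the first moment: by Bombieri--Vinogradov, $\sum_{p\le x}\omega_l(p-1)=\sum_{q\le(\log x)l^l}\pi(x;q,1)=\frac{x}{\log x}\left(\log l+\log_2 l+M\right)+O\left(x/\log^B x\right)$, and the conclusion is then drawn by Markov's inequality; note that a first moment alone yields density $1$ only if the implied constant in $\omega_l(p-1)\ll\log l$ is allowed to grow slowly, which is exactly how the lemma is deployed in Corollary \ref{dyadicappli} (threshold $\log(l_n)\log_3 p$, exceptional set $\ll x/(\log x\log_3 x)$). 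You instead carry out the full Tur\'an second-moment argument --- precisely the strengthening that the remark following Lemma \ref{omega} mentions as possible but does not execute. Your extra work sits in the off-diagonal terms, where the moduli $q_1q_2$ can exceed the Bombieri--Vinogradov range: your treatment --- such a pair has a prime factor $>x^{1/4-\delta/2}$, the tail $\sum_{x^{1/4-\delta/2}<q\le y}1/q=O(1)$ by Mertens, and Brun--Titchmarsh applies since $q_1q_2\le y^2\ll x^{1-\epsilon}$ (this is where your use of the hypothesis $l^l\ll x^{1/2-\epsilon}$ is essential; the paper's first-moment proof needs only $y\le x^{1/2-\delta}$) --- is sound, though this range actually contributes $O(\pi(x)\mu)$ rather than your stated $O(\pi(x))$, since the smaller prime still ranges over all $q\le y$; that is still admissible for the variance bound $\ll\pi(x)\mu$. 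What your route buys is a Chebyshev conclusion with a fixed constant, $\omega_l(p-1)\le 2\mu$ outside $\ll\pi(x)/\mu$ primes, quantitatively stronger than the Markov step; what it costs is the off-diagonal bookkeeping. One caveat you share with the paper: $\mu=\log\log\left((\log x)\,l^l\right)\asymp\max(\log_3 x,\log l)$, so the stated bound $\ll\log l$ genuinely requires $\log l\gg\log_3 x$; you flag this (``in the range of $l$ under consideration''), and the paper glosses the same point by writing the main term as $\log l+\log_2 l+M$ --- harmless in its application, where $\log l_n\ge\log_3 p$ throughout.
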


\begin{proof} We evaluate the average number of primes verifying the inequality of the lemma: \begin{equation*}\sum_{p\leq x \atop p \text{ prime }} \sum_{q\vert p-1 \atop q\leq (\log p)l^l, q \text{ prime }} 1= \sum_{q \leq (\log x)l^l} \sum_{p\equiv 1 \bmod q \atop p\leq x} 1 = \sum_{q\leq (\log x)l^l} \frac{x}{(q-1)\log x} + O\left(\frac{x}{\log^A x}\right)  \end{equation*} where we used the Bombieri-Vinogradov theorem (see for instance \cite{Bombieri}). Thus, using Mertens' Theorem, it gives

$$\sum_{p\leq x \atop p \text{ prime }} \sum_{q\vert p-1 \atop q\leq (\log p)l^l, q \text{ prime }} 1=\frac{x}{\log x} (\log l + \log_2 l + M) + O\left( \frac{x}{\log^B x}\right)$$ where $M$ is the Meissel-Mertens constant and $B=\min\left\{A,2\right\}$. The conclusion follows easily.

\end{proof}

\begin{remark} We could obtain the normal order of $\omega_l(p-1)$ following the method of Turan (see \cite{Turan}) using the first two moments. It might be even possible to prove a more precise statement like an Erd\"{o}s-Kac version of this result using the method of Granville and Soundararajan (see \cite{Kac}) but it is not the purpose of this note. \end{remark}

\begin{corollary}\label{dyadicappli} For almost all primes $p$, we have $h(p)\ll (\log_3 p)^2$.\end{corollary}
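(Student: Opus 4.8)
The plan is to apply Theorem~\ref{iteration} with a cleverly chosen sequence of parameters $l_n$ and to combine it with Lemma~\ref{omega}, which controls the number of small prime divisors of $p-1$ for almost all primes. The key observation is that Lemma~\ref{omega} tells us that for a density-$1$ set of primes, $\omega_l(p-1) \ll \log l$ for any fixed (or slowly growing) $l$ satisfying the hypothesis $l^l \ll x^{1/2-\epsilon}$. Since the bound in Theorem~\ref{iteration} is built entirely out of differences $\omega_{l_n}(p-1) - \omega_{l_{n+1}}(p-1)$ weighted by $1/\log(l_{n+1})$, plus a final term $\omega_{l_N}(p-1)$, I want to choose the $l_n$ so that every one of the quantities $\omega_{l_n}(p-1)$ is simultaneously controlled by Lemma~\ref{omega}.

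\medskip

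First I would fix a dyadic (doubly-exponential) sequence of parameters. A natural choice is to take $l_n$ decreasing so that $\log l_n$ roughly halves at each step: for instance set $l_N$ to be a suitable constant and, going backwards, let $\log l_n \approx 2 \log l_{n+1}$, so that $l_n \approx l_{n+1}^2$. Equivalently, starting from the top, $l_0$ is forced by the constraint $(\log p) l_0^{l_0} > p$, which means $l_0$ is of size roughly $\log p / \log_2 p$, and then $\log l_0 \asymp \log_2 p$. Halving $\log l_n$ at each step down to a constant requires about $N \asymp \log_2(\log l_0) \asymp \log(\log_2 p) = \log_3 p$ steps. This is precisely where the triple logarithm enters. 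The hypotheses of Theorem~\ref{iteration} ($l_0^{l_0} > p/\log p$ and $l_1 \le \frac{\log p}{1000 \log_2 p}$) are met by this construction for $p$ large.

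\medskip

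Next, for each $n$ I would invoke Lemma~\ref{omega} to get $\omega_{l_n}(p-1) \ll \log l_n$ on a density-$1$ set of primes; since there are only $N+1 \asymp \log_3 p$ values of $n$, the intersection of these density-$1$ sets (one exceptional set of density $o(1)$ per value of $n$, each arising from a Bombieri--Vinogradov error term) is still of density $1$, so the bound $\omega_{l_n}(p-1) \ll \log l_n$ holds for \emph{all} $n$ simultaneously for almost all $p$. Plugging this into Theorem~\ref{iteration}, the final term contributes $\omega_{l_N}(p-1) \ll \log l_N = O(1)$, and the sum becomes
$$
\sum_{n=0}^{N-1} \frac{\omega_{l_n}(p-1) - \omega_{l_{n+1}}(p-1)}{\log(l_{n+1})} \ll \sum_{n=0}^{N-1} \frac{\log l_n}{\log l_{n+1}}.
$$
With $\log l_n \asymp 2\log l_{n+1}$, each summand is $O(1)$, so the whole sum is $O(N) = O(\log_3 p)$.

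\medskip

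The main obstacle, and the point requiring genuine care, is controlling the union of exceptional sets across all $N \asymp \log_3 p$ choices of $l_n$: each application of Lemma~\ref{omega} rests on a Bombieri--Vinogradov error of size $x/\log^A x$, and one must ensure that summing $O(\log_3 p)$ such exceptional sets still yields a set of density $1$ (it does, trivially, since $\log_3 p \cdot o(1) = o(1)$), and that the hypothesis $l_n^{l_n} \ll x^{1/2-\epsilon}$ of Lemma~\ref{omega} is respected for the relevant range of $n$. The top parameters $l_0$, near $\log p/\log_2 p$, give $l_0^{l_0}$ of size roughly $p^{1-o(1)}$, which violates the $x^{1/2-\epsilon}$ constraint; so for the largest few blocks one cannot directly cite Lemma~\ref{omega} and must instead bound the corresponding $\omega_{l_n}(p-1)$ terms crudely — for these terms the denominator $\log l_{n+1} \asymp \log_2 p$ is large, and the trivial estimate $\omega(p-1) \ll \log p/\log_2 p$ makes their contribution $O(\log p/(\log_2 p)^2) = o(\log_3 p)$, hence negligible. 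Reconciling these two regimes — the top blocks handled trivially and the lower blocks handled via Lemma~\ref{omega} — and verifying the telescoping estimate produces the final bound $h(p) \ll (\log_3 p)^2$, where the extra factor of $\log_3 p$ compared to the naive $O(\log_3 p)$ reflects a slightly less aggressive choice of the sequence $l_n$ that keeps each ratio $\log l_n/\log l_{n+1}$ bounded by $\log_3 p$ rather than by an absolute constant.
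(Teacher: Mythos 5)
Your overall strategy is the paper's: a dyadic sequence $l_n$ with $\log l_n$ halving at each step, fed into Theorem~\ref{iteration}, with $\omega_{l_n}(p-1)$ controlled via Lemma~\ref{omega} simultaneously for all $n$, and the topmost scale handled separately. But there is a genuine gap at the step you yourself flag as the main obstacle, and your dismissal of it (``trivially, since $\log_3 p \cdot o(1) = o(1)$'') is not valid. The number of scales $N \asymp \log_3 p$ grows with $x$, so you are intersecting a growing number of density-$1$ sets, and a union bound over $N$ exceptional sets of \emph{unquantified} density $o(1)$ proves nothing. Worse, the quantitative version of Lemma~\ref{omega} at the threshold you use cannot deliver $o(1)$ per scale: the lemma is a first-moment estimate, $\sum_{p\le x}\omega_{l}(p-1) \sim \frac{x}{\log x}\log l$, so Markov's inequality bounds the density of $\{p \le x : \omega_{l_n}(p-1) > C\log l_n\}$ only by $\approx 1/C$ --- a \emph{constant}, uniformly in $n$. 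With the strong bound $\omega_{l_n}(p-1)\ll \log l_n$ that your argument requires at every scale, the per-scale exceptional density does not even tend to $0$, let alone beat the factor $N$. This is exactly why the paper relaxes the threshold to $\omega_{l_n}(p-1)\le \log(l_n)\log_3 p$: Markov then gives a per-scale exceptional set of size $\ll x/(\log x \log_3 x)$, small enough to sum over the $N\asymp \log_3 p$ scales, and this relaxation is the true source of the exponent $2$ in $(\log_3 p)^2$ (each of the $\asymp\log_3 p$ dyadic blocks contributes $\asymp \log_3 p$ rather than $O(1)$). Your closing guess --- that the extra $\log_3 p$ comes from choosing $\log l_n/\log l_{n+1}\le \log_3 p$ instead of $2$ --- attributes the loss to the wrong mechanism; the paper keeps the ratio exactly $2$, as you originally did, and pays the $\log_3 p$ in the per-scale bound on $\omega_{l_n}(p-1)$.

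There is a second concrete error in your treatment of the top scale: you bound its contribution by $\omega(p-1)/\log l_1 \ll \log p/(\log_2 p)^2$ and assert this is $o(\log_3 p)$, which is false --- $\log p/(\log_2 p)^2$ grows far faster than any power of $\log_3 p$. The term is salvageable, but only by invoking the almost-everywhere normal-order bound $\omega(p-1)\ll \log_2 p$ (obtainable from Lemma~\ref{omega} at the maximal admissible $l$, with $l^l \approx x^{1/2-\epsilon}$, together with the observation that at most one prime divisor of $p-1$ exceeds $x^{1/2}$), not by the trivial bound $\omega(p-1)\ll \log p/\log_2 p$. The paper's choice $\log l_1 = \frac{\log_2 p}{2\log_3 p}$ makes this top term $\ll \log_3 p$ on the density-$1$ set; with your larger $\log l_1 \asymp \log_2 p$ it would even be $O(1)$ --- but in either case the almost-everywhere input is needed, and as written your estimate fails.
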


\begin{proof} 

In order to prove this result, we define dyadically some special parameters. Let $l_n=\exp\left(\frac{\log_2 p}{2^n\log_3 p}\right)$ for $1\leq n\leq N=\frac{\log_3 p-2\log_4 p}{\log 2}$. It is easy to see that this sequence fullfils the hypotheses of Theorem \ref{iteration}, thus we derive

$$ h(p) \ll  \omega_{l_{N}}(p-1)+\sum_{n=1}^{N-1} \frac{\omega_{l_n}(p-1)-\omega_{l_{n+1}}(p-1)}{\log(l_{n+1})}+\frac{\omega(p-1)-\omega_{l_1}(p-1)}{\log (l_1)}.$$ We remark by using Lemma \ref{omega} that the bound $\omega_{l_n}(p)\leq \log (l_{n})(\log_3 p)$  holds for almost all primes $p\leq x$ with an exceptional set of  ``bad" primes of size at most $\frac{x}{\log x \log_3 x}$. Applying $N$ times Lemma \ref{omega}, we end up with a set of primes of density $1$ verifying $\omega_{l_n}(p-1)\leq \log (l_{n})(\log_3 p)$ for all $1\leq n\leq N$ with a negligeable exceptional set of ``bad" primes.  
Using the trivial inequality $\frac{\log (l_{n})}{\log (l_{n+1})} \leq 2$  this leads to

$$ h(p) \leq \log (l_{N})\log_3 p + 2N \log_3 p+ \log_3 p \ll (\log_3 p)^2$$ on a set of primes of density $1$.

\end{proof}

\begin{remark} As an application of large sieve, Pappalardi obtained a similar flavour type of result. Precisely, in \cite{Pappagenerating}, he showed that the first $\frac{\log^2 p}{\log_2 p}$ primes generate a primitive root modulo $p$ for almost all primes $p$. \end{remark}

 \section*{Acknowledgements}

The author is grateful to Sary Drappeau and Igor E. Shparlinski for very helpful discussions and is supported by the Austrian Science Fund (FWF), START-project Y-901 ``Probabilistic methods in analysis and number theory" headed by Christoph Aistleitner.

\bibliographystyle{plain}

\end{document}